\newtheorem{theorem}{Theorem}[section]
\newtheorem*{theorem A}{Theorem A}
\newtheorem*{theorem B}{N\"olker's Theorem}
\newtheorem{corollary}{Corollary}[section]
\theoremstyle{remark}
\newtheorem{remark}{Remark}[section]
\theoremstyle{remark}
\theoremstyle{definition}
\newcommand{\trace}{\operatorname{trace}}
\newtheorem{example}{Example}[section]
\numberwithin{equation}{section}
\def\({\left ( }
\def\){\right )}
\def\<{\left < }
\def\>{\right >}
\begin{document}

\newcommand{\W}{\mathcal{W}}
\newcommand{\K}{\mathcal{K}}



\vspace{2cm}

\title[Holomorphic submanifolds of some hypercomplex manifolds]
{Holomorphic submanifolds of some hypercomplex manifolds with Hermitian and Norden metrics}

\author{Galia Nakova}
\address{Department of Algebra and Geometry,
Faculty of Mathematics and Informatics,
University of Veliko Tarnovo "St. Cyril and St. Methodius",
2 T.Tarnovski Str., 5003 Veliko Tarnovo, Bulgaria}
\email{gnakova@gmail.com}

\author{Hristo Manev}
\address{Department of Medical Informatics, Biostatistics and Electronic Education, Faculty of Public Health, Medical University of Plovdiv, 15A Vasil Aprilov Blvd, 4002 Plovdiv, Bulgaria}
\email{hmanev@meduniversity-plovdiv.bg}


\subjclass[2010]{53C15, 53C50, 53C40}



\keywords{Almost hypercomplex manifolds, Hermitian metric, Norden metric, Holomorphic
submanifolds}

\begin{abstract}
In this paper we initiate the study of submanifolds of almost hypercomplex manifolds with Hermitian and Norden metrics. Object of investigations are holomorphic
submanifolds of the hypercomplex manifolds which are locally conformally equivalent to the hyper-K\"ahler manifolds of the considered type. Necessary and sufficient conditions the investigated holomorphic submanifolds to be totally umbilical or totally geodesic are obtained. Examples of the examined submanifolds are constructed.
\end{abstract}

\newcommand{\R}{\mathbb{R}}

\maketitle

\section*{Introduction} The almost hypercomplex manifolds with Hermitian and Norden  metrics have been introduced by Gribachev, Manev and Dimiev in
\cite{GMD}. These manifolds are equipped with an almost hypercomplex structure $H=(J_1,J_2,J_3)$ and a metric structure $G=(g,g_1,g_2,g_3)$. Here $g$ is a neutral
metric, which is Hermitian with respect to the almost complex structure $J_1$ of $H$ and $g$ is a Norden metric (known also as an anti-Hermitian metric) regarding the almost complex structures $J_2$ and $J_3$ of $H$. Moreover, $G$ contains the K\"ahler 2-form $g_1$ with respect to $J_1$ and two associated Norden metrics $g_2, g_3$ with respect to $J_2$ and $J_3$, respectively. The geometry of almost hypercomplex manifolds with Hermitian and Norden metrics has been investigated in \cite{GMD, MM, MG, MS}. This type of manifolds are the only possible case to involve Norden metrics on almost hypercomplex manifolds.

In this paper we initiate the study of submanifolds of the considered manifolds. As first step in this direction we study their holomorphic submanifolds. A holomorphic submanifold $M$ of an almost hypercomplex manifold with Hermitian and Norden metrics $(\overline M,H,G)$ is a non-degenerate submanifold such that the tangent bundle
is preserved by the structure $H$, which implies that $M$ is also an almost hypercomplex manifold with Hermitian and Norden metrics with respect to the restrictions of
the structures $H$ and $G$ on $M$.

An almost hypercomplex structure $H$ on $\overline M$ is called hypercomplex if the three almost complex structures $J_1,J_2,J_3$ are integrable. Objects of special interest
in this work are ambient manifolds $(\overline M,H,G)$ belonging to the class denoted by $\W$ of hypercomplex manifolds with Hermitian and Norden metrics. This is the class of the locally conformally equivalent manifolds to the manifolds in the class $\K$ consisting of the hyper-K\"ahler manifolds of the considered type. The class $\K$ is an important subclass of $\W$, where the considered manifolds have parallel $J_1,J_2,J_3$ with respect to the Levi-Civita connection $\nabla$ of $g$.

The present paper is organized as follows. In Section 1 we present some definitions and facts about almost hypercomplex manifolds with Hermitian and Norden metrics.
Section 2 is devoted to the study of holomorphic submanifolds of the considered manifolds belonging to the classes $\W$ and $\K$. We prove that a holomorphic
submanifold $M$ of a $\W$-manifold $\overline M$ is either a $\W$-manifold, or a  $\K$-manifold. Moreover, we obtain necessary and sufficient conditions $M$ to be in $\W$ or $\K$ in terms of the Lee 1-forms $\overline \theta _\alpha $ $(\alpha =1,2,3)$ of $\overline M$. We also show that a holomorphic
submanifold $M$ of a $\W$-manifold $\overline M$ is either totally umbilical, or totally geodesic and find necessary and sufficient conditions for this, expressed by
conditions for $\overline \theta _\alpha $ $(\alpha =1,2,3)$. We obtain that every holomorphic submanifold of a $\K$-manifold is a totally geodesic $\K$-manifold and every holomorphic $\K$-submanifold of a
$\W$-manifold is totally umbilical. In Section 3 we construct examples of the studied submanifolds.
\section{Preliminaries}
A $4n$-dimensional differentiable manifold $(M,H)$ is called {\it an almost hypercomplex manifold} \cite{AM} if it is equipped with {\it an almost hypercomplex structure}
$H=(J_1,J_2,J_3)$, which is a triple of almost complex structures having the properties:
\[
J_\alpha =J_\beta \circ J_\gamma =-J_\gamma \circ J_\beta , \quad J_\alpha ^2=-I
\]
for all cyclic permutations $(\alpha , \beta , \gamma )$ of $(1, 2, 3)$ and the identity $I$.

Let $g$ be a pseudo-Riemannian metric on $(M,H)$ which is Hermitian with respect to  $J_1$ and $g$ is a Norden metric with respect to $J_2$ and
$J_3$, i.e.
\begin{equation}\label{2.1}
g(J_1X,J_1Y)=-g(J_2X,J_2Y)=-g(J_3X,J_3Y)=g(X,Y), \quad X, Y \in TM.
\end{equation}
The associated bilinear forms $g_1$, $g_2$ and $g_3$ are determined by
\begin{equation}\label{2.2}
g_1(X,Y)=g(J_1X,Y), \quad g_2(X,Y)=g(J_2X,Y), \quad g_3(X,Y)=g(J_3X,Y).
\end{equation}
According to \eqref{2.1} and \eqref{2.2} the metric $g$ and the associated bilinear forms $g_2$ and $g_3$ are necessarily pseudo-Riemannian metrics of neutral
signature $(2n,2n)$ and $g_1$ is the known K\"ahler 2-form $\Phi $ with respect to $J_1$.

Differentiable $4n$-dimensional  manifolds $M$ equipped with structures $(H,G)$
$=(J_1,J_2,J_3,g,g_1,g_2,g_3)$ are studied in \cite{GMD, MS} and \cite{MM, MG} (under
the name {\it almost hypercomplex pseudo-Hermitian manifolds} and {\it almost hypercomplex manifolds with Hermitian and anti-Hermitian metrics}, respectively). In this paper we refer to $(M,H,G)$ as {\it an almost hypercomplex manifold with Hermitian and Norden metrics}.

The $(0,3)$-tensors $F_\alpha (X,Y,Z)=g((\nabla _XJ_\alpha )Y,Z), \, (\alpha =1,2,3)$, where $\nabla $ is the Levi-Civita connection generated by $g$, are
called {\it fundamental tensors} of $(M,H,G)$. It is well known that the almost hypercomplex structure $H=(J_1,J_2,J_3)$ is a hypercomplex structure if the Nijenhuis
tensor $N_\alpha (X,Y)$ for $J_\alpha $ vanishes for each $\alpha =1,2,3$. Moreover, an almost hypercomplex structure $H$ is hypercomplex if and only if two of the
tensors $N_\alpha $ vanish \cite{AM}.

Let us remark that $(M,H,G)$ is an indefinite almost Hermitian manifold with respect to $J_1$ and it is an almost complex manifold with Norden metric with respect to $J_2$ and $J_3$. The basic classifications of the almost complex manifolds with Hermitian metric
and with Norden metric are given in \cite{GH} and \cite{GB}, respectively.

Let we assume that $(M,H,G)$ belongs to the class ${\W}_4$ from the Gray-Hervella classification, which is a subclass of the class of Hermitian manifolds and it is the class of locally conformal equivalent manifolds to the K\"ahler manifolds. Then the almost complex structure $J_1$ is integrable and $F_1$ is given by
\begin{equation}\label{2.3}
\begin{array}{lc}
F_1(X,Y,Z)=\frac{1}{2(2n-1)}\left[g(X,Y)\theta _1(Z)-g(X,Z)\theta _1(Y)\right. \\
\qquad \qquad \qquad\qquad\qquad\left.-g(X,J_1Y)\theta _1(J_1Z)+g(X,J_1Z)\theta _1(J_1Y)\right] ,
\end{array}
\end{equation}
where the Lee form $\theta _1$ is defined by $\theta _1(Z)=g^{ij}F_1(e_i,e_j,Z)$ for a basis $\left\{e_i\right\}$, $(i=1, \ldots ,4n)$ and $(g^{ij})$ is the inverse matrix of the matrix $(g_{ij})$ of the metric $g$.

One of the basic classes of the integrable almost complex manifolds with Norden metric is ${\W}_1$. It is a subclass of the integrable (almost) complex manifolds with Norden metric and it is the class of the locally conformal equivalent manifolds to the K\"ahler manifolds with Norden metric. If $(M,H,G)$ belongs to ${\W}_1(J_\alpha )$, then
$J_\alpha $ ($\alpha =2,3$) is integrable and the following equality holds
\begin{equation}\label{2.4}
\begin{array}{lc}
F_\alpha (X,Y,Z)=\frac{1}{4n}\left[g(X,Y)\theta _\alpha (Z)+g(X,Z)\theta _\alpha(Y)\right. \\
\qquad \qquad \qquad\qquad\left.+g(X,J_\alpha Y)\theta _\alpha (J_\alpha Z)+g(X,J_\alpha Z)\theta _\alpha (J_\alpha Y)\right] ,
\end{array}
\end{equation}
where the Lee form $\theta _\alpha$ is defined by $\theta _\alpha (Z)=g^{ij}F_\alpha (e_i,e_j,Z)$ ($\alpha =2,3$) for a basis $\left\{e_i\right\}$, $(i=1, \ldots ,4n)$.

The class $\W$, studied in \cite{GMD}, consists of all hypercomplex manifolds with Hermitian and Norden metrics such that $F_1$ satisfies \eqref{2.3} and $F_2, F_3$
satisfy \eqref{2.4}. A manifold $(M,H,G)$ belonging to the class $\W$ is called briefly a {\it $\W$-manifold}.

It is known \cite{GMD} that necessary and sufficient conditions $(M,H,G)$ to be a $\W$-manifold are
\begin{equation}\label{2.5}
\theta _\alpha \circ J_\alpha =-\frac{2n}{2n-1}\theta _1\circ J_1 ,\quad \alpha =2,3.
\end{equation}

According to \cite{GMD}, an almost hypercomplex manifold with Hermitian and Norden metrics is called a hyper-K\"ahler manifold of the considered type if $\nabla {J_\alpha }=0$ ($\alpha =1,2,3$) with
respect to the Levi-Civita connection generated by $g$. The class of these manifolds is denoted by $\K$ in \cite{GMD} and thus we call them \emph{$\K$-manifolds}. It is clear that for the $\K$-manifolds the conditions $F_\alpha =0$
($\alpha =1,2,3$) hold and therefore $\theta _\alpha =0$ ($\alpha =1,2,3$) and $\K$ is a subclass of $\W$.

\section{Holomorphic submanifolds of ${\W}$-manifolds}
A $4m$-dimensional submanifold $M$ of a $4n$-dimensional $(m<n)$ almost hypercomplex manifold with Hermitian and Norden metrics $(\overline M,H,G)$ is said to be
{\it a holomorphic submanifold} if the tangent bundle $TM$ is preserved by $J_\alpha $ ($\alpha =1,2,3$), i.e. $J_\alpha (T_pM)=T_pM$ for all $p\in M$ and the
restriction of the metric $g$ on $TM$ has a maximal rank.

We denote the restrictions of $g$ and $J_\alpha $ ($\alpha =1,2,3$) on $TM$ by the same letters.\\
Let $\overline \nabla $ and $\nabla $ be the Levi-Civita connections of $(\overline M,H,G)$ and its submanifold $M$, respectively. Then the Gauss-Weingarten formulae are given by
\begin{equation}\label{3.1}
\overline \nabla _XY=\nabla _XY+h(X,Y) ,
\end{equation}
\begin{equation}\label{3.2}
\overline \nabla _XN=-A_NX+D_XN ,
\end{equation}
where $X,Y\in TM, \, N\in TM^\bot $, $h$ is the second fundamental form of $M$, $A_N$ is the shape operator in the direction of a normal vector field $N$ and
$D$ is the normal connection. The shape operator and the  second fundamental form are related as usually by $g(A_NX,Y)=g(h(X,Y),N)$.

A submanifold $M$ is said to be {\it totally
umbilical} if $h(X,Y)=g(X,Y)C$, where $C=\frac{1}{4m}\trace h$ is the {\it mean curvature vector} of $M$. If $h(X,Y)=0$, then $M$ is said to be
{\it totally geodesic}.

Let $(\overline M,H,G)$ be a $\W$-manifold. We denote by $\overline F_\alpha $, $\overline \theta _\alpha $ ($\alpha =1,2,3$) the fundamental tensors and the Lee forms of $\overline M$. Then the equalities \eqref{2.3},  \eqref{2.4} and \eqref{2.5} hold. If $p_\alpha $ are the covectors of $\overline \theta _\alpha $,
i.e. $\overline \theta _\alpha (Z)=g(Z,p_\alpha )$ for any $Z\in T\overline M$ and $\alpha =1,2,3$, from \eqref{2.5} we obtain
\begin{equation}\label{3.3}
J_\alpha p_\alpha=\frac{2n}{2n-1}J_1p_1 , \quad \alpha =2,3.
\end{equation}
Let $M$ be a submanifold of $(\overline M,H,G)$. Then for every $p_\alpha $ the following decomposition is valid
\begin{equation}\label{3.4}
p_\alpha =p_\alpha^\top +p_\alpha^\bot,
\end{equation}
where $p_\alpha^\top \in TM$ and $p_\alpha^\bot \in TM^\bot $. Taking into account \eqref{3.4} we conclude that $\overline \theta _\alpha $ ($\alpha =1,2,3$) vanishes on $TM$ (resp. $TM^\bot $) if and only if $p_\alpha^\top$ (resp. $p_\alpha^\bot$) vanishes.

Let $M$ be a holomorphic submanifold of a $\W$-manifold $(\overline M,H,G)$.
%
Then from \eqref{2.5} it follows that $\overline \theta _1$, $\overline \theta _2$ and $\overline \theta _3$ are either all non-zero on $TM$ (resp. $TM^\bot $), or all vanish on $TM$ (resp. $TM^\bot $).
Moreover, having in mind \eqref{3.4} and the fact that $\overline \theta _\alpha \neq 0$  ($\alpha =1,2,3$)  for a $\W$-manifold which is not a $\K$-manifold, we establish that if all $\overline \theta _\alpha $ ($\alpha =1,2,3$) vanish on
$TM$ (resp. $TM^\bot $), then all $\overline \theta _\alpha $ ($\alpha =1,2,3$) are non-zero on $TM^\bot $ (resp. $TM$).

\begin{theorem}
Let $M$ be a holomorphic submanifold of a $\W$-manifold $(\overline M,H,G)$. Then we get
\begin{equation}\label{3.8}
h(X,Y)=\frac{1}{2(2n-1)}g(X,Y)J_1(p^\bot_1)=\frac{1}{4n}g(X,Y)J_\alpha (p_\alpha^\bot ), \quad \alpha =2,3 ,
\end{equation}
\begin{equation}\label{3.9}
\begin{array}{ll}
(\nabla _XJ_1)Y=\frac{1}{2(2n-1)}\left[g(X,Y)p^\top_1-\overline \theta _1(Y)X+g(X,J_1Y)J_1(p^\top_1)\right. \\ \\
\qquad \qquad \qquad\qquad\quad\left.-\overline \theta _1(J_1Y)J_1X\right] ,
\end{array}
\end{equation}
\begin{equation}\label{3.10}
\begin{array}{ll}
(\nabla _XJ_\alpha )Y=\frac{1}{4n}\left[g(X,Y)p_\alpha^\top +\overline \theta _\alpha (Y)X+g(X,J_\alpha Y)J_\alpha (p_\alpha^\top )\right. \\ \\
\qquad \qquad \qquad\quad\left.+\overline \theta _\alpha (J_\alpha Y)J_\alpha X\right] , \quad \alpha =2,3 ,
\end{array}
\end{equation}
\begin{equation}\label{3.11}
A_{J_1N}X=J_1(A_NX)+\frac{1}{2(2n-1)}\left[\overline \theta _1(N)X+\overline \theta _1(J_1N)J_1X\right] ,
\end{equation}
\begin{equation}\label{3.12}
A_{J_\alpha N}X=J_\alpha (A_NX)-\frac{1}{4n}\left[\overline \theta _\alpha (N)X+\overline \theta _\alpha (J_\alpha N)J_\alpha X\right] , \quad \alpha =2,3 ,
\end{equation}
\begin{equation}\label{3.13}
D_XJ_\alpha N=J_\alpha (D_XN) , \quad \alpha =1,2,3 ,
\end{equation}
where $X,Y\in TM$ and $N\in TM^\bot $.
\end{theorem}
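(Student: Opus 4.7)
The plan is to apply the Gauss and Weingarten formulae inside the standard identity $(\overline\nabla_X J_\alpha)W = \overline\nabla_X(J_\alpha W) - J_\alpha\overline\nabla_X W$, and to compare the resulting tangential and normal parts with the expressions for $\overline F_\alpha(X,W,\cdot) = g((\overline\nabla_XJ_\alpha)W,\cdot)$ coming from \eqref{2.3} and \eqref{2.4}. Setting $\overline\theta_\alpha(\cdot) = g(\cdot,p_\alpha)$ and using that $J_1$ is $g$-skew while $J_2,J_3$ are $g$-symmetric (immediate consequences of \eqref{2.1}), a direct rewriting of \eqref{2.3}--\eqref{2.4} expresses $(\overline\nabla_XJ_\alpha)W$ as a linear combination of $p_\alpha$, $J_\alpha p_\alpha$, $X$, $J_\alpha X$ with coefficients $g(X,W)$, $g(X,J_\alpha W)$, $\overline\theta_\alpha(W)$, $\overline\theta_\alpha(J_\alpha W)$.

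Taking $W=Y\in TM$, I would split $p_\alpha = p_\alpha^\top + p_\alpha^\bot$ and use holomorphicity (so that $J_\alpha p_\alpha^\top\in TM$ and $J_\alpha p_\alpha^\bot\in TM^\bot$) together with the Gauss decomposition
\[
(\overline\nabla_XJ_\alpha)Y = (\nabla_XJ_\alpha)Y + \bigl[h(X,J_\alpha Y)-J_\alpha h(X,Y)\bigr].
\]
Equating tangential parts immediately yields \eqref{3.9} and \eqref{3.10}; equating normal parts produces the three auxiliary identities
\[
h(X,J_\alpha Y)-J_\alpha h(X,Y) = c_\alpha\bigl[g(X,Y)p_\alpha^\bot + g(X,J_\alpha Y)J_\alpha p_\alpha^\bot\bigr], \quad \alpha=1,2,3,
\]
with $c_1=\tfrac{1}{2(2n-1)}$ and $c_2=c_3=\tfrac{1}{4n}$.

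The heart of the proof is to extract \eqref{3.8} from these three identities. Projecting \eqref{3.3} onto $TM^\bot$ gives $c_\alpha J_\alpha p_\alpha^\bot = c_1 J_1 p_1^\bot$ for $\alpha=2,3$, so the three candidate right-hand sides in \eqref{3.8} already coincide; call this common vector $V$. Setting $T(X,Y):=h(X,Y)-g(X,Y)V$, one checks from the three normal-part identities (together with $c_\alpha J_\alpha p_\alpha^\bot = c_1 J_1 p_1^\bot$) that $T(X,J_\alpha Y)=J_\alpha T(X,Y)$ for each $\alpha$, and then, by symmetry of $h$, that $T(J_\alpha X,Y)=J_\alpha T(X,Y)$ as well. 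Computing $T(J_1X,J_2Y)$ by applying $J_\alpha$-linearity in each slot separately gives $J_1J_2\,T(X,Y)=J_2J_1\,T(X,Y)$, hence $(J_1J_2-J_2J_1)T(X,Y)=2J_3\,T(X,Y)=0$ and so $T\equiv 0$, which is precisely \eqref{3.8}.

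For the Weingarten-type identities, I take $W=N\in TM^\bot$. The orthogonalities $g(X,N)=g(X,J_\alpha N)=0$ (the second using $J_\alpha X\in TM$) annihilate two of the four terms in the vector form of $(\overline\nabla_XJ_\alpha)N$, so this covariant derivative is purely tangential. Substituting Weingarten for $\overline\nabla_X(J_\alpha N)$ and $\overline\nabla_X N$ and splitting into tangential and normal parts then gives \eqref{3.11} and \eqref{3.12} from the tangential part and \eqref{3.13} from the normal part. The main obstacle of the whole argument is the uniqueness step above: coupling the three normal-part equations through the quaternionic relations $J_1J_2=-J_2J_1=J_3$ and the symmetry of $h$ to force $h$ into the umbilical form \eqref{3.8}; everything else is essentially bookkeeping with Gauss--Weingarten and the explicit formulas \eqref{2.3}--\eqref{2.4}.
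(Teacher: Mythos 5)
Your proposal is correct and follows the same skeleton as the paper's proof: insert the Gauss formula into $(\overline\nabla_XJ_\alpha)Y=(\nabla_XJ_\alpha)Y+h(X,J_\alpha Y)-J_\alpha h(X,Y)$, compare with the vector forms of \eqref{2.3}--\eqref{2.4}, and read off \eqref{3.9}--\eqref{3.10} from the tangential part and the three auxiliary identities $h(X,J_\alpha Y)-J_\alpha h(X,Y)=c_\alpha[g(X,Y)p_\alpha^\bot+g(X,J_\alpha Y)J_\alpha p_\alpha^\bot]$ from the normal part; the Weingarten half \eqref{3.11}--\eqref{3.13} is handled identically. The one place you diverge is the extraction of \eqref{3.8}: the paper symmetrizes in $X,Y$ to get $h(J_1X,J_1Y)+h(X,Y)=\tfrac{1}{2n-1}g(X,Y)J_1(p_1^\bot)$ and $h(J_\alpha X,J_\alpha Y)=-h(X,Y)$ ($\alpha=2,3$), then substitutes $X\mapsto J_2X$, $Y\mapsto J_3Y$; you instead set $T=h-g\otimes V$ with $V=c_1J_1p_1^\bot$ (the three candidates agreeing by \eqref{3.3} and holomorphicity), verify that $T$ is $J_\alpha$-linear in each slot, and kill it via $J_1J_2-J_2J_1=2J_3$. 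Both rest on the same two ingredients --- symmetry of $h$ and the quaternionic anticommutation --- so this is a repackaging rather than a new idea, but your version isolates the umbilicity statement cleanly ($T\equiv0$) and makes it transparent why a single anticommuting pair of complex structures already forces $h$ into the form \eqref{3.8}. All the individual steps you state check out (in particular $J_1V=-c_1p_1^\bot$ makes $T(X,J_1Y)=J_1T(X,Y)$ hold despite the sign asymmetry between the Hermitian and Norden cases).
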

\begin{proof}
Using \eqref{3.1}, we obtain
\begin{equation}\label{3.14}
(\overline \nabla _XJ_\alpha )Y=(\nabla _XJ_\alpha )Y+h(X,J_\alpha Y)-J_\alpha h(X,Y) , \quad \alpha =1,2,3 .
\end{equation}
From \eqref{2.3} and \eqref{2.4} for $\overline F_1$ and $\overline F_\alpha $ ($\alpha =2,3$), respectively, we get
\begin{equation}\label{3.15}
\begin{array}{ll}
(\overline \nabla _XJ_1)Y=\frac{1}{2(2n-1)}\left[g(X,Y)p_1-\overline \theta _1(Y)X+g(X,J_1Y)J_1p_1\right. \\
\qquad \qquad \qquad\qquad\quad\left.-\overline \theta _1(J_1Y)J_1X\right] ,
\end{array}
\end{equation}
\begin{equation}\label{3.16}
\begin{array}{ll}
(\overline \nabla _XJ_\alpha )Y=\frac{1}{4n}\left[g(X,Y)p_\alpha +\overline \theta _\alpha (Y)X+g(X,J_\alpha Y)J_\alpha p_\alpha \right. \\
\qquad \qquad \qquad\quad\left.+\overline \theta _\alpha (J_\alpha Y)J_\alpha X\right] .
\end{array}
\end{equation}
By substituting \eqref{3.15} in \eqref{3.14} and taking into account that $M$ is holomorphic, we obtain \eqref{3.9} and
\begin{equation}\label{3.17}
h(X,J_1Y)-J_1h(X,Y)=\frac{1}{2(2n-1)}\left[g(X,Y)p^\bot_1+g(X,J_1Y)J_1(p^\bot_1)\right].
\end{equation}
Analogously, using \eqref{3.14} and \eqref{3.16}, we get \eqref{3.10}  and
\begin{equation}\label{3.18}
h(X,J_\alpha Y)-J_\alpha h(X,Y)=\frac{1}{4n}\left[g(X,Y)p_\alpha^\bot +g(X,J_\alpha Y)J_\alpha (p_\alpha^\bot )\right] , \ \alpha =2,3 .
\end{equation}
We replace $X,Y$ by $Y,X$ in \eqref{3.17} and \eqref{3.18} and we find that
\begin{equation}\label{3.19}
h(Y,J_1X)-J_1h(X,Y)=\frac{1}{2(2n-1)}\left[g(X,Y)p^\bot_1-g(X,J_1Y)J_1(p^\bot_1)\right] ,
\end{equation}
\begin{equation}\label{3.20}
h(Y,J_\alpha X)-J_\alpha h(X,Y)=\frac{1}{4n}\left[g(X,Y)p_\alpha^\bot +g(X,J_\alpha Y)J_\alpha (p_\alpha^\bot )\right] , \ \alpha =2,3 .
\end{equation}
Subtracting \eqref{3.19} and \eqref{3.20} from \eqref{3.17} and \eqref{3.18}, respectively, we obtain
\begin{equation*}\label{3.21}
h(X,J_1Y)-h(J_1X,Y)=\frac{1}{2n-1}g(X,J_1Y)J_1(p^\bot_1) ,
\end{equation*}
\begin{equation*}\label{3.22}
h(X,J_\alpha Y)=h(J_\alpha X,Y) , \quad \alpha =2,3 .
\end{equation*}
From the latter two equalities it follows
\begin{equation}\label{3.23}
h(J_1X,J_1Y)+h(X,Y)=\frac{1}{2n-1}g(X,Y)J_1(p^\bot_1) ,
\end{equation}
\begin{equation}\label{3.24}
h(J_\alpha X,J_\alpha Y)=-h(X,Y) , \quad \alpha =2,3 .
\end{equation}
We substitute $J_2X$ and $J_3Y$ for $X$ and $Y$ in \eqref{3.23}, respectively and taking into account the equality \eqref{3.24} we find the first equality in
\eqref{3.8}. The rest equalities in \eqref{3.8} hold because of \eqref{3.3}.
By using \eqref{3.2} we obtain
\begin{equation}\label{3.25}
(\overline \nabla _XJ_\alpha )N=-A_{J_\alpha N}X+J_\alpha (A_NX)+D_XJ_\alpha N-J_\alpha (D_XN) , \quad \alpha =1,2,3 .
\end{equation}
From the conditions \eqref{2.3} and \eqref{2.4} for $\overline F_1$ and $\overline F_\alpha $ ($\alpha =2,3$), respectively, it follows
\begin{equation}\label{3.26}
(\overline \nabla _XJ_1)N=-\frac{1}{2(2n-1)}\left[\overline \theta _1(N)X+\overline \theta _1(J_1N)J_1X\right] ,
\end{equation}
\begin{equation}\label{3.27}
(\overline \nabla _XJ_\alpha )N=-\frac{1}{4n}\left[\overline \theta _\alpha (N)X+\overline \theta _\alpha (J_\alpha N)J_\alpha X\right] , \quad \alpha =2,3 .
\end{equation}
Finally, \eqref{3.25}, \eqref{3.26} and \eqref{3.27} imply \eqref{3.11}, \eqref{3.12}, \eqref{3.13}.
\end{proof}
\begin{theorem}\label{theorem 3.2}
Let $M$ be a holomorphic submanifold of a $\W$-manifold $(\overline M,H,G)$. Then $M$ is a $\W$-manifold (resp. a $\K$-manifold) if and only if all $\overline \theta _\alpha $ ($\alpha =1,2,3$) are non-zero (resp. all $\overline \theta _\alpha $ ($\alpha =1,2,3$) vanish) on $TM$.
\end{theorem}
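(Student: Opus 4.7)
Plan. The statement is essentially a corollary of the formulas \eqref{3.9} and \eqref{3.10} combined with the dichotomy already recorded in the remark preceding the theorem: either all three $\overline{\theta}_\alpha$ vanish on $TM$, or all three are non-zero on $TM$. My plan is to push the expressions for $F_\alpha^M$ through \eqref{3.9}--\eqref{3.10} and read off which class $M$ sits in.

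For the $\K$-direction, suppose $\overline{\theta}_\alpha|_{TM} = 0$ for $\alpha = 1,2,3$. By \eqref{3.4} this is equivalent to $p_\alpha^\top = 0$, so every term on the right-hand sides of \eqref{3.9} and \eqref{3.10} vanishes on $TM \times TM$. Hence $\nabla^M J_\alpha = 0$, i.e., $F_\alpha^M = 0$ for all $\alpha$, and $M$ is a $\K$-manifold. Conversely, if $M \in \K$ then $F_\alpha^M = 0$; comparing with the right-hand side of \eqref{3.9} (or \eqref{3.10}) and using non-degeneracy of $g$ on $TM$ (pick $X$ orthogonal to $Y$, $J_1Y$, and $p_1^\top$, $J_1 p_1^\top$, which is possible because $\dim M \geq 4$) forces $p_\alpha^\top = 0$, hence $\overline{\theta}_\alpha|_{TM} = 0$.

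For the $\W$-direction, assume $\overline{\theta}_\alpha|_{TM} \neq 0$. Pair \eqref{3.9} with $Z \in TM$ and use the Hermitian relation $g(J_1 p_1^\top, Z) = -\overline{\theta}_1(J_1 Z)$ together with $g(p_1^\top, Z) = \overline{\theta}_1(Z)$ to put $F_1^M$ in exactly the structural shape of \eqref{2.3}, but with prefactor $\tfrac{1}{2(2n-1)}$ inherited from $\overline M$ in place of the intrinsic $\tfrac{1}{2(2m-1)}$. Similarly, pairing \eqref{3.10} with $Z$ and using the Norden relation $g(J_\alpha p_\alpha^\top, Z) = \overline{\theta}_\alpha(J_\alpha Z)$ puts $F_\alpha^M$ in the shape of \eqref{2.4} with prefactor $\tfrac{1}{4n}$. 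Absorbing the ratio of prefactors into the Lee form yields the candidates
\[
\theta_1^M = \tfrac{2m-1}{2n-1}\,\overline{\theta}_1|_{TM}, \qquad \theta_\alpha^M = \tfrac{m}{n}\,\overline{\theta}_\alpha|_{TM} \quad (\alpha = 2,3),
\]
all non-zero by assumption. Integrability of $J_\alpha|_M$ is inherited from $\overline M$, since $TM$ is $J_\alpha$-invariant and the Nijenhuis tensor of $\overline M$ vanishes.

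The remaining thing to check, and the main bookkeeping step I expect to have to do carefully, is that the candidate Lee forms $\theta_\alpha^M$ on $M$ satisfy the compatibility \eqref{2.5} characterizing the class $\W$. Substituting the scalings above into $\theta_\alpha^M \circ J_\alpha = -\tfrac{2m}{2m-1}\,\theta_1^M \circ J_1$, the factors of $m$ cancel and the condition collapses to $(\overline{\theta}_\alpha \circ J_\alpha)|_{TM} = -\tfrac{2n}{2n-1}\,(\overline{\theta}_1 \circ J_1)|_{TM}$, which is simply the restriction of \eqref{2.5} on $\overline M$ to $TM$ and thus holds automatically. The converse here is immediate from the dichotomy: if $M$ lies in $\W$ but not in $\K$, then some $F_\alpha^M \not\equiv 0$, and \eqref{3.9} or \eqref{3.10} forces $p_\alpha^\top \neq 0$; the remaining $\overline{\theta}_\beta|_{TM}$ are then non-zero by the same dichotomy.
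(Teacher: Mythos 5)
Your argument is essentially the paper's own proof: both compute the intrinsic fundamental tensors $F_\alpha$ of $M$ from \eqref{3.9}--\eqref{3.10}, rewrite them in the defining shapes \eqref{2.3}--\eqref{2.4} with the rescaled Lee forms $\theta_1=\tfrac{2m-1}{2n-1}\,\overline{\theta}_1|_{TM}$ and $\theta_\alpha=\tfrac{m}{n}\,\overline{\theta}_\alpha|_{TM}$ ($\alpha=2,3$), and conclude in both directions from these scaling relations together with the all-or-nothing dichotomy for the restricted Lee forms (your extra verification of \eqref{2.5} and of integrability is harmless bookkeeping the paper leaves implicit). The one blemish is the parenthetical in your $\K$-converse: a nonzero $X\in TM$ orthogonal to $Y$, $J_1Y$, $p_1^\top$ and $J_1p_1^\top$ need not exist when $\dim M=4$ (i.e.\ $m=1$), since the orthogonal complement of a four-dimensional non-degenerate subspace is then trivial; but that step is dispensable, because the Lee-form scaling relations you have already derived yield the converse directly, exactly as in the paper.
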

\begin{proof}
It is clear  that a holomorphic submanifold of an almost hypercomplex manifold with Hermitian and Norden metrics
$(\overline M,H,G)$ supplied with the restrictions of the structures $(H,G)=(J_1,J_2,J_3,g,g_1,g_2,g_3)$ (the restrictions are denoted by the same letters) is also
an almost hypercomplex manifold with Hermitian and Norden metrics. Using \eqref{3.9} and \eqref{3.10}, for the fundamental tensors $F_\alpha $ ($\alpha =1,2,3$) of $M$ we get the following:
\begin{equation}\label{3.28}
\begin{array}{llll}
F_1(X,Y,Z)=\frac{1}{2(2m-1)}\left[g(X,Y)\theta _1(Z)-g(X,Z)\theta _1(Y)\right. \\
\qquad \qquad \qquad\qquad\qquad\left.-g(X,J_1Y)\theta _1(J_1Z)+g(X,J_1Z)\theta _1(J_1Y)\right] ,\\
F_\alpha (X,Y,Z)=\frac{1}{4m}\left[g(X,Y)\theta _\alpha (Z)+g(X,Z)\theta _\alpha(Y)\right. \\
\qquad \qquad \qquad\qquad\left.+g(X,J_\alpha Y)\theta _\alpha (J_\alpha Z)+g(X,J_\alpha Z)\theta _\alpha (J_\alpha Y)\right] ,
\end{array}
\end{equation}
where $X,Y,Z\in TM$ and $\theta _\alpha (Z)=g^{ij}F_\alpha (e_i,e_j,Z)$ ($\alpha =1,2,3$) for an arbitrary basis $\left\{e_i\right\}, \, (i=1, \ldots ,4m)$ of $TM$.
Moreover, the Lee forms $\theta _\alpha $ of $M$ and $\overline \theta _\alpha $ of $\overline M$ are related by the equalities
\begin{equation}\label{3.29}
\theta _1(Z)=\frac{2m-1}{2n-1}\overline \theta _1(Z) , \qquad \theta _\alpha (Z)=\frac{m}{n}\overline \theta _\alpha (Z) , \quad \alpha =2,3 .
\end{equation}
From the latter results it follows that $\theta _1$, $\theta _2$ and $\theta _3$ are either all non-zero, or all vanish.
In the first case, having in mind \eqref{2.3}, \eqref{2.4} and \eqref{3.28}, we obtain that $M$ is a $\W$-manifold. In the case when all $\theta _\alpha $ ($\alpha =1,2,3$)
vanish, from \eqref{3.28} we get that $F_\alpha =0$ ($\alpha =1,2,3$). Therefore, $M$ is a $\K$-manifold.

Conversely, let $M$ be a $\W$-manifold
(resp. a $\K$-manifold). Then all $\theta _\alpha $ ($\alpha =1,2,3$) are non-zero (resp. all $\theta _\alpha $ ($\alpha =1,2,3$) vanish) and the
equalities \eqref{3.29} imply all $\overline \theta _\alpha $ ($\alpha =1,2,3$) are non-zero (resp. all $\overline \theta _\alpha $ ($\alpha =1,2,3$) vanish) on $TM$.
\end{proof}
As an immediate consequence from Theorem \ref{theorem 3.2} and \eqref{3.8} we state
\begin{corollary}\label{corollary 3.1}
Every holomorphic submanifold $M$ of a $\K$-manifold $(\overline M,H,G)$ is a totally geodesic $\K$-manifold.
\end{corollary}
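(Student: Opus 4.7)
The plan is to observe that this corollary follows almost immediately by combining Theorem \ref{theorem 3.2} with the second-fundamental-form formula \eqref{3.8}, so the proof will be quite short with essentially no obstacle.

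First I would unpack what it means for $\overline M$ to be a $\K$-manifold: as recalled in the Preliminaries, this forces $\overline F_\alpha = 0$ and therefore $\overline \theta_\alpha = 0$ identically on $T\overline M$ for $\alpha = 1,2,3$. In particular $\overline \theta_\alpha$ vanishes on $TM$, and via the decomposition \eqref{3.4} the covectors $p_\alpha$ decompose trivially with $p_\alpha^\top = p_\alpha^\bot = 0$.

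Next I would invoke Theorem \ref{theorem 3.2}. Since $\overline M$ being a $\K$-manifold is a special case of being a $\W$-manifold, the theorem applies; and since all $\overline \theta_\alpha$ vanish on $TM$, the theorem's criterion for the $\K$-case is satisfied, so $M$ is itself a $\K$-manifold.

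Finally, to conclude that $M$ is totally geodesic, I would apply the first equality of \eqref{3.8}:
\begin{equation*}
h(X,Y) = \frac{1}{2(2n-1)} g(X,Y) J_1(p_1^\bot).
\end{equation*}
Since $p_1 = 0$, its normal component $p_1^\bot$ also vanishes, giving $h(X,Y) = 0$ for all $X,Y \in TM$. Hence $M$ is totally geodesic, completing the proof. The only thing to double-check is that Theorem \ref{theorem 3.2} and equation \eqref{3.8} are indeed applicable to the $\K$-subclass (they are, since $\K \subset \W$), but no additional calculation is required.
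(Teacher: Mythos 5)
Your proof is correct and follows exactly the paper's route: the corollary is stated there as an immediate consequence of Theorem \ref{theorem 3.2} (applied with $\overline\theta_\alpha=0$ on $TM$) and formula \eqref{3.8} (with $p_\alpha^\bot=0$). No differences worth noting.
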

\begin{theorem}\label{theorem 3.3}
Let $M$ be a holomorphic submanifold of a $\W$-manifold $(\overline M,H,G)$. Then $M$ is totally umbilical (resp. totally geodesic) if and only if all $\overline \theta _\alpha $ ($\alpha =1,2,3$) are non-zero
(resp. all $\overline \theta _\alpha $ ($\alpha =1,2,3$) vanish) on $TM^\bot $.
\end{theorem}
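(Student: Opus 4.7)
The proof is essentially a reading-off from equation (3.8) of the preceding theorem, which already displays the second fundamental form of $M$ in the umbilical shape
\[
h(X,Y)=\frac{1}{2(2n-1)}g(X,Y)J_1(p^\bot_1)=\frac{1}{4n}g(X,Y)J_\alpha(p^\bot_\alpha),\quad \alpha=2,3.
\]
Thus $h(X,Y)=g(X,Y)\,C$ with $C=\frac{1}{2(2n-1)}J_1(p^\bot_1)\in TM^\bot$, so $M$ is automatically totally umbilical, and is totally geodesic precisely when $C=0$. The entire content of the theorem is therefore to translate the condition $C=0$ (resp.\ $C\neq 0$) into the vanishing (resp.\ non-vanishing) of $\overline\theta_\alpha$ on $TM^\bot$.

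The first step of the plan is the elementary observation that $\overline\theta_\alpha$ vanishes on $TM^\bot$ if and only if $p_\alpha^\bot=0$: for $N\in TM^\bot$ one has $\overline\theta_\alpha(N)=g(N,p_\alpha)=g(N,p_\alpha^\bot)$ by orthogonality, and since the submanifold is non-degenerate the restriction of $g$ to $TM^\bot$ is non-degenerate, whence the pairing $N\mapsto g(N,p_\alpha^\bot)$ vanishes identically exactly when $p_\alpha^\bot=0$. Combined with the trichotomy-free observation recorded just before Theorem 1 — that on a holomorphic submanifold the forms $\overline\theta_1,\overline\theta_2,\overline\theta_3$ either all vanish on $TM^\bot$ or all fail to vanish on $TM^\bot$ — it suffices to monitor any single index.

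For the ``if'' direction, if all $\overline\theta_\alpha$ vanish on $TM^\bot$ then $p_1^\bot=0$, so $J_1(p_1^\bot)=0$, and (3.8) gives $h\equiv 0$, i.e.\ $M$ is totally geodesic. If all $\overline\theta_\alpha$ are non-zero on $TM^\bot$, then $p_1^\bot\neq 0$, hence $J_1(p_1^\bot)\neq 0$ since $J_1$ is an isomorphism, and (3.8) exhibits $h$ in umbilical form with non-zero mean curvature vector, so $M$ is (properly) totally umbilical. For the converse, if $M$ is totally geodesic then picking any $X\in TM$ with $g(X,X)\neq 0$ (which exists because $g|_{TM}$ is non-degenerate of neutral signature) in (3.8) forces $J_1(p_1^\bot)=0$, hence $p_1^\bot=0$, and by the equivalence above and the trichotomy all $\overline\theta_\alpha$ vanish on $TM^\bot$; if $M$ is totally umbilical but not totally geodesic, then its mean curvature vector $C$ is non-zero, so by comparison with (3.8) we get $J_1(p_1^\bot)\neq 0$, hence $p_1^\bot\neq 0$, and all three $\overline\theta_\alpha$ are non-zero on $TM^\bot$.

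There is no serious obstacle; the only point needing care is the non-degeneracy argument used to pass from $g(X,X)J_1(p_1^\bot)=0$ for all $X\in TM$ to $p_1^\bot=0$, and the parallel passage between the vanishing of $\overline\theta_\alpha|_{TM^\bot}$ and of $p_\alpha^\bot$. Both are immediate consequences of the fact that a holomorphic submanifold is, by definition, non-degenerate.
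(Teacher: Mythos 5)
Your proposal is correct and follows essentially the same route as the paper: both read off from \eqref{3.8} that $h(X,Y)=g(X,Y)C$ with $C$ a fixed multiple of $J_1(p_1^\bot)$, and then translate the (non)vanishing of $C$ into the (non)vanishing of $p_\alpha^\bot$, hence of $\overline\theta_\alpha$ on $TM^\bot$ (an equivalence the paper records just after \eqref{3.4}). The only cosmetic difference is that the paper first computes $\trace h$ to identify $C$ as the mean curvature vector, whereas you invoke the non-degeneracy of $g$ on $TM$ and $TM^\bot$ explicitly; the substance is identical.
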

\begin{proof}
Using \eqref{3.8} we get
\[
\trace h=\frac{2m}{2n-1}J_1(p^\bot_1)=\frac{m}{n}J_\alpha (p_\alpha^\bot ) , \quad \alpha =2,3 .
\]
Hence for the mean curvature vector $C$ we have
\begin{equation}\label{3.30}
C=\frac{1}{2(2n-1)}J_1(p^\bot_1)=\frac{1}{4n}J_\alpha (p_\alpha^\bot ) , \quad \alpha =2,3 .
\end{equation}
According to \eqref{3.8} and \eqref{3.30} we obtain
\begin{equation}\label{3.31}
h(X,Y)=g(X,Y)C, \quad X,Y\in TM.
\end{equation}
Now, from \eqref{3.31} it follows that $M$ is either totally umbilical (when $C\neq 0$), or totally geodesic (when $C=0$). Let $M$ be totally umbilical (resp. totally
geodesic). Then the equalities \eqref{3.30} imply that all $p_\alpha^\bot $ ($\alpha =1,2,3$) are non-zero (resp. all $p_\alpha^\bot $ ($\alpha =1,2,3$) vanish). Hence we obtain that all $\overline \theta _\alpha $ ($\alpha =1,2,3$) are non-zero (resp. all $\overline \theta _\alpha $ ($\alpha =1,2,3$) vanish)
on $TM^\bot $. We prove the sufficient condition of the theorem using \eqref{3.30}.
\end{proof}
Using Theorem \ref{theorem 3.2} and Theorem \ref{theorem 3.3} we obtain the following corollaries.
\begin{corollary}\label{corollary 3.2}
Every totally geodesic holomorphic submanifold $M$ of a $\W$-manifold $(\overline M,H,G)$ is a $\W$-manifold.
\end{corollary}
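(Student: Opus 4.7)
The plan is to deduce Corollary \ref{corollary 3.2} by simply chaining Theorem \ref{theorem 3.3} with Theorem \ref{theorem 3.2}, after separating the problem into the two cases for the ambient manifold $\overline M$ (either $\K$ or $\W\setminus\K$). The whole content of the statement is a reconciliation of the two characterizations (total geodesy via $TM^\bot$, $\W$-type via $TM$), using the dichotomy established before Theorem~1 that the Lee forms are either all non-zero or all vanishing on a given subbundle.

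First I would invoke Theorem \ref{theorem 3.3}: if $M$ is totally geodesic then all $\overline\theta_\alpha$ ($\alpha=1,2,3$) vanish on $TM^\bot$, i.e.\ the normal components $p_\alpha^\bot$ of the covectors vanish. Then I would split into two cases according to whether $\overline M\in\K$ or not. If $\overline M$ is a $\K$-manifold, then $\overline\theta_\alpha\equiv 0$ on $\overline M$, hence in particular they vanish on $TM$; Theorem \ref{theorem 3.2} then yields that $M$ is a $\K$-manifold, and since $\K\subset\W$, the conclusion holds. (This case also follows directly from Corollary \ref{corollary 3.1}.)

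In the remaining case $\overline M\in\W\setminus\K$, the Lee forms $\overline\theta_\alpha$ are non-zero, so by the remark preceding Theorem~1, since all $\overline\theta_\alpha$ vanish on $TM^\bot$, all of them must be non-zero on $TM$. Applying Theorem \ref{theorem 3.2} in this setting gives that $M$ is a $\W$-manifold, which completes the argument.

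The main obstacle is essentially administrative rather than mathematical: one must be attentive to the convention that $\K\subset\W$ so that the $\K$-case is not excluded by the $\W$-conclusion, and one has to use correctly the dichotomy $p_\alpha=p_\alpha^\top+p_\alpha^\bot$ to pass from \emph{vanishing on} $TM^\bot$ to \emph{non-vanishing on} $TM$ when $\overline M\notin\K$. No new computations are needed beyond the identities already assembled in Theorems \ref{theorem 3.2} and \ref{theorem 3.3}.
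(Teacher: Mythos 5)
Your argument is correct and is exactly the route the paper intends: the paper offers no explicit proof, stating only that the corollary follows from Theorems \ref{theorem 3.2} and \ref{theorem 3.3}, and your chaining of Theorem \ref{theorem 3.3} (total geodesy $\Leftrightarrow$ all $\overline\theta_\alpha$ vanish on $TM^\bot$) with the dichotomy remark preceding Theorem 1 and then Theorem \ref{theorem 3.2} is precisely that deduction. Your explicit case split on $\overline M\in\K$ versus $\overline M\in\W\setminus\K$, together with the observation that $\K\subset\W$, is a careful handling of a point the paper leaves implicit, but it does not constitute a different method.
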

\begin{corollary}\label{corollary 3.3}
Every holomorphic $\K$-submanifold $M$ of a $\W$-manifold $(\overline M,H,G)$ is totally umbilical.
\end{corollary}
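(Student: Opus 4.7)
The plan is to chain Theorem \ref{theorem 3.2} with Theorem \ref{theorem 3.3} through the orthogonal decomposition $p_\alpha = p_\alpha^\top + p_\alpha^\bot$ introduced in \eqref{3.4}. Both theorems are stated as equivalences in terms of the vanishing or non-vanishing of the Lee covectors $p_\alpha$ on $TM$ and on $TM^\bot$, so the corollary should follow from a short two-step translation between these two loci, with no computation beyond what has already been carried out in the proofs of those theorems.

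First I would invoke Theorem \ref{theorem 3.2} in the forward direction: since $M$ is a holomorphic $\K$-submanifold, every Lee form $\overline{\theta}_\alpha$ ($\alpha = 1,2,3$) vanishes on $TM$, which by the identification $\overline{\theta}_\alpha(Z) = g(Z, p_\alpha)$ is equivalent to $p_\alpha^\top = 0$. Next I would use the standing hypothesis that $\overline M$ lies in $\W$ but not in $\K$, which forces $p_\alpha \neq 0$ on $\overline M$; combined with $p_\alpha^\top = 0$, the decomposition \eqref{3.4} yields $p_\alpha = p_\alpha^\bot \neq 0$ for each $\alpha$. Hence $\overline{\theta}_\alpha$ is non-zero on $TM^\bot$ for every $\alpha$, and Theorem \ref{theorem 3.3} immediately delivers the conclusion that $M$ is totally umbilical. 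This is essentially the contrapositive companion to Corollary \ref{corollary 3.2}.

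The main (and only) subtlety is the degenerate case $\overline M \in \K$: there every $\overline{\theta}_\alpha$ vanishes identically, the hypothesis of Theorem \ref{theorem 3.3} on $TM^\bot$ fails, and Corollary \ref{corollary 3.1} instead gives that $M$ is totally geodesic, which corresponds to the limiting case $C = 0$ of the umbilical formula \eqref{3.31}. Thus Corollary \ref{corollary 3.3} is naturally read with the implicit assumption $\overline M \in \W \setminus \K$, in which case the mean curvature vector $C = \frac{1}{2(2n-1)} J_1(p_1^\bot)$ from \eqref{3.30} is non-zero and \eqref{3.31} is genuinely the umbilical relation. I do not foresee any computational obstacle, since the formula \eqref{3.8} for the second fundamental form has already been extracted in Theorem 3.1 and the entire argument is a two-line consequence of Theorems \ref{theorem 3.2} and \ref{theorem 3.3}.
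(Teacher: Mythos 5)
Your argument is correct and follows the paper's intended route exactly: Theorem \ref{theorem 3.2} gives $p_\alpha^\top=0$, the paper's earlier observation that $\overline\theta_\alpha\neq 0$ for a $\W$-manifold not in $\K$ combined with \eqref{3.4} then forces $p_\alpha^\bot\neq 0$, and Theorem \ref{theorem 3.3} concludes total umbilicity. Your explicit flagging of the degenerate case $\overline M\in\K$ (where Corollary \ref{corollary 3.1} gives total geodesy instead) matches the implicit convention the paper adopts when it restricts that observation to $\W$-manifolds which are not $\K$-manifolds.
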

\begin{corollary}\label{corollary 3.4}
Let $M$ be a holomorphic $\W$-submanifold of a $\W$-manifold $(\overline M,H,G)$.
If all $\overline \theta _\alpha $ ($\alpha =1,2,3$) are non-zero (resp. all $\overline \theta _\alpha $ ($\alpha =1,2,3$) vanish) on $TM^\bot $, then $M$
is totally umbilical (resp. totally geodesic).
\end{corollary}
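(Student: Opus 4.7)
The plan is to observe that Corollary 3.4 is the ``if'' direction of Theorem \ref{theorem 3.3} specialised to the case where $M$ is additionally required to be a $\W$-manifold, and therefore no new computation is needed.

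First I would note that a holomorphic $\W$-submanifold $M$ is, in particular, a holomorphic submanifold, so formula \eqref{3.8} for the second fundamental form applies:
\[
h(X,Y)=\frac{1}{2(2n-1)}g(X,Y)J_1(p_1^\bot)=\frac{1}{4n}g(X,Y)J_\alpha(p_\alpha^\bot),\quad \alpha=2,3.
\]
Combining this with the remark following \eqref{3.4}, which identifies the vanishing of $\overline\theta_\alpha$ on $TM^\bot$ with the vanishing of $p_\alpha^\bot$, reduces each case of the corollary to inspection: the non-vanishing hypothesis produces a non-zero mean curvature vector $C$, so by \eqref{3.31} we have $h(X,Y)=g(X,Y)C$ with $C\neq 0$, i.e.\ total umbilicity; the vanishing hypothesis gives $p_\alpha^\bot=0$, whence $h\equiv 0$ and $M$ is totally geodesic. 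This is precisely the forward direction of Theorem \ref{theorem 3.3}, which may simply be quoted.

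Finally I would point out the role of the $\W$-submanifold assumption. It is not needed for the implication itself, but via Theorem \ref{theorem 3.2} it is equivalent to all $\overline\theta_\alpha$ being non-zero on $TM$, i.e.\ $p_\alpha^\top\neq 0$. Combined with the hypothesis of Corollary 3.4, this yields a complete description of each Lee covector $p_\alpha=p_\alpha^\top+p_\alpha^\bot$ relative to the decomposition \eqref{3.4}: in the totally umbilical branch both components are non-zero, while in the totally geodesic branch $p_\alpha$ is purely tangent to $M$. There is no substantive obstacle here; Corollary 3.4 is a corollary in the strict sense, obtained by juxtaposing the dichotomies of Theorems \ref{theorem 3.2} and \ref{theorem 3.3}.
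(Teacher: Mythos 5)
Your proposal is correct and follows essentially the same route as the paper, which derives Corollary \ref{corollary 3.4} directly as the sufficient-condition part of Theorem \ref{theorem 3.3} (via \eqref{3.8}, \eqref{3.30} and \eqref{3.31}), with the $\W$-submanifold hypothesis playing no role in the implication itself. Your additional observation about the decomposition $p_\alpha=p_\alpha^\top+p_\alpha^\bot$ in each branch is accurate but not needed.
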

\section{Examples of holomorphic submanifolds}
\begin{example}\label{example 4.1}
Let we consider the vector space
\[
{\R}^{4n}=\{p=(x^1,\ldots ,x^n,y^1,\ldots ,y^n,u^1,\ldots ,u^n,v^1,\ldots ,v^n) \;|\; x^i, y^i, u^i, v^i \in {\R}\} .
\]
In \cite{GMD} a hypercomplex structure $H=(J_1,J_2,J_3)$ and a pseudo-Riemannian metric $g$ of signature $(2n,2n)$ are defined on ${\R}^{4n}$ as follows:
\begin{equation}\label{4.1}
\begin{array}{llll}
\displaystyle{
J_1\frac{\partial}{\partial x^i}=\frac{\partial}{\partial y^i}, \quad J_1\frac{\partial}{\partial y^i}=-\frac{\partial}{\partial x^i}, \quad
J_1\frac{\partial}{\partial u^i}=-\frac{\partial}{\partial v^i}, \quad J_1\frac{\partial}{\partial v^i}=\frac{\partial}{\partial u^i};} \cr \cr
\displaystyle{
J_2\frac{\partial}{\partial x^i}=\frac{\partial}{\partial u^i}, \quad J_2\frac{\partial}{\partial y^i}=\frac{\partial}{\partial v^i}, \quad
J_2\frac{\partial}{\partial u^i}=-\frac{\partial}{\partial x^i}, \quad J_2\frac{\partial}{\partial v^i}=-\frac{\partial}{\partial y^i};} \cr \cr
\displaystyle{
J_3\frac{\partial}{\partial x^i}=-\frac{\partial}{\partial v^i}, \quad J_3\frac{\partial}{\partial y^i}=\frac{\partial}{\partial u^i}, \quad
J_3\frac{\partial}{\partial u^i}=-\frac{\partial}{\partial y^i}, \quad J_3\frac{\partial}{\partial v^i}=\frac{\partial}{\partial x^i} ,}
\end{array}
\end{equation}
\begin{equation}\label{4.2}
g(X,X)=\delta _{ij}\left(-p^ip^j-q^iq^j+r^ir^j+s^is^j\right) ,
\end{equation}
where ${X=p^i\frac{\partial}{\partial x^i}+q^i\frac{\partial}{\partial y^i}+r^i\frac{\partial}{\partial u^i}+s^i\frac{\partial}{\partial v^i}, (i=1,2,\ldots ,n})$ is an arbitrary vector field and $\delta _{ij}$ is the Kronecker delta.

From \eqref{4.1} and \eqref{4.2} it follows that the metric $g$ is Hermitian with respect to  $J_1$ and $g$ is a Norden  metric with respect to $J_2$ and $J_3$.
According to \eqref{4.2} the components $g_{ij}$ of the matrix of $g$ with respect to the local basis $\left\{\frac{\partial}{\partial x^i} , \frac{\partial}{\partial y^i} , \frac{\partial}{\partial u^i} , \frac{\partial}{\partial v^i}\right\}$, $\displaystyle{(i=1,2,\ldots ,n)}$ are constants. Therefore the Levi-Civita connection $\overline \nabla $ of the metric $g$
is flat. Moreover, having in mind \eqref{4.1}, it is easy to check that $\overline \nabla J_\alpha =0$, ($\alpha =1,2,3$).  Hence, $({\R}^{4n},H,G)$ is a $\K$-manifold.

Let $M$ be a submanifold of $({\R}^{4n},H,G)$ given by the following immersion:
\begin{equation*}
\begin{array}{lrr}
\varphi (x^1,\ldots ,x^m,y^1,\ldots ,y^m,u^1,\ldots ,u^m,v^1,\ldots ,v^m)\cr
=\left(x^1,\ldots ,x^m,x^{m+1},\ldots ,x^n,y^1,\ldots ,y^m,y^{m+1},\ldots ,y^n,\right. \cr
\phantom{=}\left.\,\,u^1,\ldots ,u^m,u^{m+1},\ldots ,u^n,v^1,\ldots ,v^m,v^{m+1},\ldots ,v^n\right) .
\end{array}
\end{equation*}
Identifying the point $(x^1,\ldots ,x^n,y^1,\ldots ,y^n,u^1,\ldots ,u^n,v^1,\ldots ,v^n)$ in ${\R}^{4n}$ with its position vector $Z$, we obtain that the tangent
bundle $TM$ of $M$ is spanned by
\[
\begin{array}{l}
\left\{\frac{\partial Z}{\partial x^i}=\frac{\partial}{\partial x^i} ,\quad \frac{\partial Z}{\partial y^i}=\frac{\partial}{\partial y^i} ,\quad
\frac{\partial Z}{\partial u^i}=\frac{\partial}{\partial u^i} ,\quad \frac{\partial Z}{\partial v^i}=\frac{\partial}{\partial v^i}\right\} , \quad i=1,2,\ldots ,m .
\end{array}
\]
From \eqref{4.1} and \eqref{4.2} it follows that the complex structures $J_\alpha $ ($\alpha =1,2,3$) preserve $TM$ and the submanifold $M$ is non-degenerate. Thus,
$M$ is a $4m$-dimensional holomorphic submanifold of $({\R}^{4n},H,G)$. The normal bundle $TM^\bot $ is spanned by
\[
\begin{array}{l}
\left\{\frac{\partial}{\partial x^{m+j}} ,\quad  \frac{\partial}{\partial y^{m+j}} ,\quad
\frac{\partial}{\partial u^{m+j}} ,\quad  \frac{\partial}{\partial v^{m+j}}\right\} , \quad j=1,\ldots ,n-m .
\end{array}
\]

Let $X$ be a tangent vector field and $N$ be a normal vector field such that
\[
\begin{array}{l}
N=a^j\frac{\partial}{\partial x^{m+j}}+b^j\frac{\partial}{\partial y^{m+j}}
+c^j\frac{\partial}{\partial u^{m+j}}+d^j\frac{\partial}{\partial v^{m+j}} , \quad j=1,\ldots ,n-m.
\end{array}
\]
Then, taking into account that $\overline \nabla $ is flat, we find for $j=1,\ldots ,n-m$
\[
\begin{array}{l}
\overline \nabla _XN=(Xa^j)\frac{\partial}{\partial x^{m+j}}+(Xb^j)\frac{\partial}{\partial y^{m+j}}
+(Xc^j)\frac{\partial}{\partial u^{m+j}}+(Xd^j)\frac{\partial}{\partial v^{m+j}}, 
\end{array}
\]
which means that $A_NX=0$. Hence $h(X,Y)=0$, i.e. $M$ is totally geodesic.
\end{example}

\begin{example}
Let $(M,H,G)=(J_1,J_2,J_3,g,g_1,g_2,g_3)$ and $(M^\prime ,H^\prime ,G^\prime )=(J_1^\prime ,J_2^\prime ,\\
J_3^\prime ,g^\prime,g_1^\prime ,g_2^\prime ,g_3^\prime )$ be
almost hypercomplex manifolds with Hermitian and Norden metrics of dimension $4m$ and $4m'$, respectively. Let $\overline M=M\times M^\prime $ be the product manifold
of $M$ and $M^\prime $. Hence, $\overline M$ is a $4(m+m')$-dimensional differentiable manifold and for any vector field $\overline X$ on $\overline M$ the
following decomposition is valid
\[
\overline X=(X,X^\prime )=X+X^\prime  ,
\]
where $X\in TM$ and $X^\prime \in TM^\prime $. Following \cite{K}, we define three almost complex structures $\overline J_1, \overline J_2, \overline J_3$ and a metric
$\overline g$ on $\overline M$ by
\begin{equation}\label{4.3}
\overline J_\alpha (X,X^\prime )=(J_\alpha X,J_\alpha ^\prime X^\prime ) , \quad \alpha =1,2,3 ,
\end{equation}
\begin{equation}\label{4.4}
\overline g((X,X^\prime ),(Y,Y^\prime ))=g(X,Y)+g^\prime (X^\prime ,Y^\prime )
\end{equation}
for arbitrary $(X,X^\prime ), (Y,Y^\prime )\in T\overline M$. According to \eqref{4.4} any vector fields $X\in TM$ and  $X^\prime \in TM^\prime $ are mutually
orthogonal.

It is easy to check that $\overline H=(\overline J_1,\overline J_2,\overline J_3)$ is an almost hypercomplex structure on $\overline M$ and $\overline g$ satisfies \eqref{2.1}. Thus $(\overline M,\overline H,\overline G)$ is an almost hypercomplex manifold with Hermitian and Norden metrics, where by $\overline G$ is
denoted the structure $(\overline g,\overline g_1,\overline g_2,\overline g_3)$. Let $\overline \nabla $, $\nabla $ and $\nabla ^\prime $ be the Levi-Civita connections
of the metrics $\overline g$, $g$ and $g^\prime $, respectively. By $\overline F_\alpha $, $F_\alpha $ and $F_\alpha ^\prime $ $(\alpha =1,2,3)$ we denote the
fundamental tensors on $\overline M$, $M$ and $M^\prime $, respectively, i.e.
\begin{equation*}
\begin{array}{lll}
\overline F_\alpha (\overline X,\overline Y,\overline Z)=\overline g((\overline \nabla _{\overline X}\overline J_\alpha )\overline Y,\overline Z) ; \quad
\alpha =1,2,3 ; \quad \overline X,\overline Y,\overline Z\in T\overline M , \cr
F_\alpha (X,Y,Z)=g((\nabla _XJ_\alpha )Y,Z) ; \quad \alpha =1,2,3 ; \quad X,Y,Z\in TM , \cr
F_\alpha ^\prime (X^\prime ,Y^\prime ,Z^\prime )=g^\prime ((\nabla ^\prime _{X^\prime }J_\alpha ^\prime )Y^\prime ,Z^\prime ) ; \quad \alpha =1,2,3 ;
\quad X^\prime ,Y^\prime ,Z^\prime \in TM^\prime .
\end{array}
\end{equation*}
In \cite{K, GN} it is shown that for $\overline F_\alpha $, $F_\alpha $ and $F_\alpha ^\prime $ $(\alpha =1,2,3)$ and for the corresponding Lee forms
$\overline \theta _\alpha $, $\theta _\alpha $ and $\theta _\alpha ^\prime $ $(\alpha =1,2,3)$ the following interrelations hold
\begin{equation}\label{4.5}
\overline F_\alpha (\overline X,\overline Y,\overline Z)=F_\alpha (X,Y,Z)+F_\alpha ^\prime (X^\prime ,Y^\prime ,Z^\prime ) , \quad \alpha =1,2,3 ,
\end{equation}
\begin{equation}\label{4.6}
\overline \theta _\alpha (\overline Z)=\theta _\alpha (Z)+\theta _\alpha ^\prime (Z^\prime ) , \quad \alpha =1,2,3 ,
\end{equation}
where $\overline X,\overline Y,\overline Z\in T\overline M$, \, $X,Y,Z\in TM$, \, $X^\prime ,Y^\prime ,Z^\prime \in TM^\prime $.

Now, we consider the case when $M$ is a $\K$-manifold and $M^\prime $ is a $\W$-manifold. Then by using \eqref{4.5} we obtain
$\overline F_\alpha (\overline X,\overline Y,\overline Z)=F_\alpha ^\prime (X^\prime ,Y^\prime ,Z^\prime )$, which implies that \\
$\overline F_\alpha (X^\prime ,Y^\prime ,Z^\prime )=F_\alpha ^\prime (X^\prime ,Y^\prime ,Z^\prime )$, $(\alpha =1,2,3)$. Thus we get
\begin{equation}\label{4.7}
\overline F_\alpha (\overline X,\overline Y,\overline Z)=\overline F_\alpha (X^\prime ,Y^\prime ,Z^\prime )=F_\alpha ^\prime (X^\prime ,Y^\prime ,Z^\prime ) ,
\quad \alpha =1,2,3 .
\end{equation}
Taking into account that $M$ is a $\W$-manifold,  \eqref{4.6} and \eqref{4.7} we obtain that $\overline M$ is a $\W$-manifold.

Let $(U\times V,x^1,\ldots ,x^{4m},y^1,
\ldots ,y^{4m'})$, \, $(U,x^1,\ldots ,x^{4m})$ and $(V,y^1,\ldots ,y^{4m'})$ be coordinate neighborhoods on $\overline M$, $M$ and $M^\prime $, respectively. Then $M$
and $M^\prime $ are submanifolds of $\overline M$, given by the natural imbedding maps
\begin{equation*}
\begin{array}{ll}
i_1(x^1,\ldots ,x^{4m})=(x^1,\ldots ,x^{4m},y^1,\ldots ,y^{4m'}) , \cr
i_2(y^1,\ldots ,y^{4m'})=(x^1,\ldots ,x^{4m},y^1,\ldots ,y^{4m'}) ,
\end{array}
\end{equation*}
respectively. From \eqref{4.3} it follows that the submanifolds $M$ and $M^\prime $ are holomorphic. Moreover,
$TM^\bot $ (resp. ${TM^\prime }^\bot $) coincides with $TM^\prime $ (resp. TM).

Since $M$ is a holomorphic $\K$-submanifold of a $\W$-manifold
$\overline M$, from Corollary \ref{corollary 3.3} it follows that $M$ is totally umbilical. Indeed, \eqref{4.6} imply that all
$\overline \theta _\alpha $ $(\alpha =1,2,3)$ are non-zero on $TM^\bot $. Then, according to Theorem \ref{theorem 3.3}, $M$ is totally umbilical.

The holomorphic submanifold $M^\prime $ of a $\W$-manifold $\overline M$ is a $\W$-manifold. Then from \eqref{4.6} we have $\overline \theta _\alpha (Z)=
\theta _\alpha (Z)=0$ $(\alpha =1,2,3)$. The last equalities mean that all $\overline \theta _\alpha $ $(\alpha =1,2,3)$ vanish on ${TM^\prime }^\bot $. Finally, from Corollary \ref{corollary 3.4} it follows that $M^\prime $ is totally geodesic.
\end{example}
\begin{remark}
A series of explicit examples of hypercomplex manifolds with Hermitian and Norden metrics belonging to the classes $\K$ and $\W$ are given in \cite{MS} and  \cite{MM}, respectively.
\end{remark}

\end{document}